\newtheorem{theorem}{Theorem}
\newtheorem{lemma}[theorem]{Lemma}
\newtheorem{prop}[theorem]{Proposition}
\theoremstyle{remark}
\newcommand{\R}{\mathbb R}
\newcommand{\Z}{\mathbb Z}
\newcommand{\Q}{\mathbb Q}
\newcommand{\LL}{\mathcal L}
\newcommand{\FF}{\mathcal F}
\newcommand{\DD}{\mathcal D}
\newcommand{\ord}{\text{ord}}
\newcommand{\beq}{\begin{equation*}}
\newcommand{\eeq}{\end{equation*}}
\begin{document}

\title{Zero Repulsion in Families of Elliptic Curve $L$-Functions and an Observation of S. J. Miller}

\author{Simon Marshall}
\address{Department of Mathematics\\
Northwestern University\\
2033 Sheridan Road\\
Evanston\\
IL 60202, USA}
\email{slm@math.northwestern.edu}

\begin{abstract}
We provide a theoretical explanation for an observation of S. J. Miller that if $L(s,E)$ is an elliptic curve $L$-function for which $L(1/2, E) \neq 0$, then the lowest lying zero of $L(s,E)$ exhibits a repulsion from the critical point which is not explained by the standard Katz-Sarnak heuristics.  We establish a similar result in the case of first-order vanishing.
\end{abstract}

\maketitle

\section{Introduction}

In the paper \cite{Mi}, S. J. Miller investigated the statistics of the zeros of various families of elliptic curve $L$-functions.  His key observation is illustrated in Figure \ref{fig:miller} below.  It shows a histogram of the first zero above the central point for rank zero elliptic curve $L$-functions generated by randomly selecting the coefficients $c_1$ up to $c_6$ in the defining equation

\begin{equation*}
y^2 + c_1 xy + c_3 y = x^3 + c_2 x^2 + c_4 x + c_6
\end{equation*}
for curves with conductors in the ranges indicated in the caption.  The zeros are scaled by the mean density of low zeros of the $L$-functions and the plots are normalized so that they represent the probability density function for the first zero of $L$-functions from this family.  Miller observes that there is clear repulsion of the first zero from the central point; that is, the plots drop to zero at the origin, indicating a very low probability of finding an $L$-function with a low first zero.  This is quite surprising, as one would expect to be able to model the distribution of the lowest zeros by the eigenvalue distribution of matrices in $SO(2N)$, with $N$ chosen to be equal to half the logarithm of the conductor of the curve.  However, it is well known in random matrix theory that matrices in $SO(2N)$ exhibit no repulsion of the first eigenvalue from the origin.\\

\begin{figure}
\includegraphics[width=6.48cm]{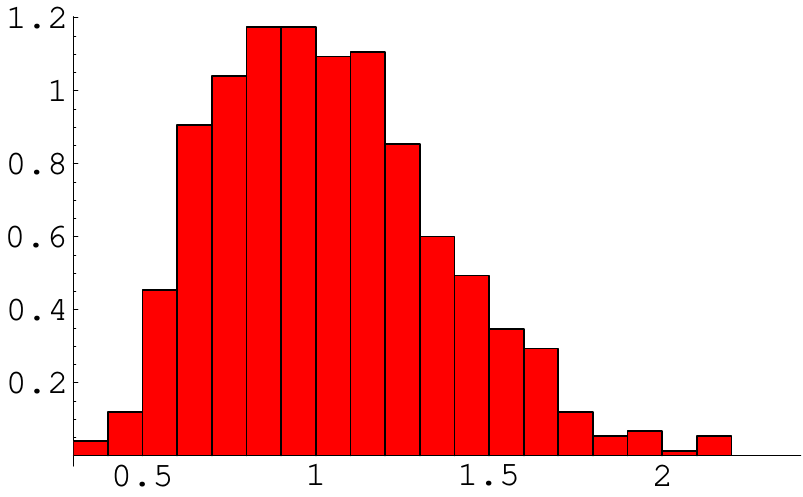} \includegraphics[width=6.48cm]{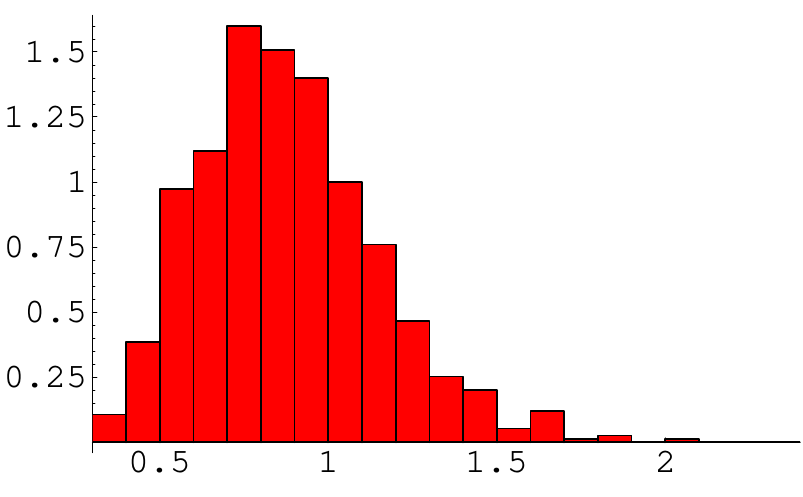}
\caption{\label{fig:miller} First normalized zero above the
central point: Left: $750$ rank 0 curves from $y^2 +
a_1xy+a_3y=x^3+a_2x^2+a_4x+a_6$, $\log({\rm cond}) \in [3.2,
12.6]$. Right: $750$ rank 0 curves from
$y^2 + a_1xy+a_3y=x^3+a_2x^2+a_4x+a_6$, $\log({\rm cond}) \in
[12.6, 14.9]$.}
\end{figure}

In this paper, we prove a result which provides theoretical support for Miller's observation.  Rather than considering algebraic families of elliptic curves, we consider quadratic twists of a fixed curve $E/\Q$.  Let $d \neq 1$ be a fundamental discriminant which is relatively prime to the conductor $M$ of $E$, and let $\chi_d$ be the associated quadratic character of modulus $|d|$.  We shall consider the families $\FF^+(E)$ and $\FF^-(E)$ of all even and odd twists of $E$ by the characters $\chi_d$, and define $\DD^+(E)$ and $\DD^-(E)$ to be the corresponding sets of fundamental discriminants.  In addition, we define the subfamilies $\FF^0(E) \subset \FF^+(E)$ and $\FF^1(E) \subset \FF^-(E)$ by

\begin{equation*}
\FF^i(E) = \{ L(s, E \times \chi_d) \: | \: \ord_{s = 1/2} L(s, E \times \chi_d) = i \}, \quad i = 0, 1,
\end{equation*}
together with the associated sets of discriminants

\begin{equation*}
\DD^i(E) = \{ d \: | \: \ord_{s = 1/2} L(s, E \times \chi_d) = i \}.
\end{equation*}
As is standard, we shall assume that all $L$-functions under consideration satisfy the Riemann hypothesis, and for any $d$ we define $\gamma_d$ to be the height of the lowest non-real zero of $L(s, E \times \chi_d)$.  Our main result is the following.

\begin{theorem}
\label{repel}
For all $d \in \DD^0(E)$, we have

\begin{equation}
\label{twistbound}
\frac{ \ln |\gamma_d| }{ \ln |d| } \ge -1/4 + O_{E,\epsilon}( ( \ln \ln |d| )^{-1+\epsilon}).
\end{equation}
In the odd case, there exists a nonzero integer $d_0$ satisfying $(d_0, 2M) = 1$ such that if we define

\begin{equation*}
\DD^-_{d_0}(E) = \{ d \: | \: (d, 2 M d_0) = 1, d_0 d < 0, d_0 d \text{ is square mod } 4M \},
\eeq
then $\DD^-_{d_0}(E) \subseteq \DD^-(E)$ and for all $d \in \DD^1(E) \cap \DD^-_{d_0}(E)$ we have

\begin{equation*}
\frac{ \ln |\gamma_d| }{ \ln |d| } \ge -1/4 + O_{E,\epsilon}( ( \ln \ln |d| )^{-1+\epsilon}).
\end{equation*}

\end{theorem}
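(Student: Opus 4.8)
The plan is to convert the central-point non-vanishing into a lower bound for $|\gamma_d|$ by comparing, through the Hadamard factorisation, the value of the completed $L$-function at $s=1/2$ with its value at the edge $s=1$ of the critical strip. Write $\Lambda(s)=\Lambda(s,E\times\chi_d)=N_d^{s/2}\gamma_\infty(s)L(s,E\times\chi_d)$ for the completed $L$-function, where $N_d=Md^2$ is the conductor of the twist, $\gamma_\infty$ the (fixed-shape) archimedean factor, and functional equation $\Lambda(s)=\epsilon_d\Lambda(1-s)$ with $\epsilon_d=+1$ for $d\in\DD^+(E)$. Under RH the zeros of $\Lambda$ are exactly the points $1/2+i\gamma$ with $\gamma\in\R$, so $G(z):=\Lambda(1/2+z)$ is an even entire function of order one; since $\sum_\gamma\gamma^{-2}<\infty$, pairing $\gamma$ with $-\gamma$ collapses its Hadamard product to the convergent form $\Lambda(1/2+z)=\Lambda(1/2)\prod_{\gamma>0}(1+z^2/\gamma^2)$ (the exponential factor being killed by evenness), which is legitimate precisely because $d\in\DD^0(E)$ makes $\Lambda(1/2)=N_d^{1/4}\gamma_\infty(1/2)L(1/2,E\times\chi_d)$ nonzero.

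The next step is to set $z=1/2$. Every factor on the right exceeds one and one of them equals $1+1/(4\gamma_d^2)$, so
\[
1+\frac{1}{4\gamma_d^2}\le\frac{|\Lambda(1)|}{|\Lambda(1/2)|}=N_d^{1/4}\,\frac{\gamma_\infty(1)}{\gamma_\infty(1/2)}\cdot\frac{|L(1,E\times\chi_d)|}{|L(1/2,E\times\chi_d)|}.
\]
Since the archimedean ratio is $O_E(1)$, $N_d^{1/4}\asymp_M|d|^{1/2}$, and $|L(1,E\times\chi_d)|\ll\log N_d\ll_\epsilon|d|^{\epsilon}$ by standard bounds at $\Re s=1$ (convexity already suffices), this gives $\gamma_d^2\gg_{E,\epsilon}|L(1/2,E\times\chi_d)|\,|d|^{-1/2-\epsilon}$. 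I would finish by inserting the RH-conditional lower bound for the central value: whenever $L(1/2,E\times\chi_d)\neq0$ one has $\log|L(1/2,E\times\chi_d)|\ge-C_{E,\epsilon}\log|d|\,(\ln\ln|d|)^{-1+\epsilon}$ (indeed with the sharper $\ln\ln\ln|d|/\ln\ln|d|$ shape). Taking logarithms and dividing by $\ln|d|$ then yields $(\ln|\gamma_d|)/(\ln|d|)\ge-1/4+O_{E,\epsilon}((\ln\ln|d|)^{-1+\epsilon})$; in particular the error term in \eqref{twistbound} originates entirely from this last estimate, the repulsion itself being the elementary Hadamard comparison.

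For the odd case I would run the identical argument on the even completed function $\widetilde\Lambda(s):=\Lambda(s,E\times\chi_d)/(s-1/2)$, which is entire, satisfies $\widetilde\Lambda(s)=\widetilde\Lambda(1-s)$, and has $\widetilde\Lambda(1/2)=\Lambda'(1/2,E\times\chi_d)$, nonzero exactly on $\DD^1(E)$; its nonzero zeros are the $1/2\pm i\gamma$ with $\gamma>0$, so the same manipulation gives $1+1/(4\gamma_d^2)\ll_{E,\epsilon}|d|^{1/2+\epsilon}/|L'(1/2,E\times\chi_d)|$, hence $\gamma_d^2\gg|L'(1/2,E\times\chi_d)|\,|d|^{-1/2-\epsilon}$. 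The odd case thus reduces to the lower bound $\log|L'(1/2,E\times\chi_d)|\ge-C_{E,\epsilon}\log|d|\,(\ln\ln|d|)^{-1+\epsilon}$ for $d\in\DD^1(E)\cap\DD^-_{d_0}(E)$, and this is where the auxiliary $d_0$ and the congruence conditions enter. One first invokes a non-vanishing theorem (of Waldspurger--Kohnen--Zagier or Bump--Friedberg--Hoffstein type) to fix a fundamental discriminant $d_0$, prime to $2M$, with $E\times\chi_{d_0}$ of even sign and $L(1/2,E\times\chi_{d_0})\neq0$; the conditions $(d,2Md_0)=1$, $d_0d<0$, and ``$d_0d$ square mod $4M$'' are precisely the local conditions forcing the root number of $E\times\chi_d$ to be $-1$ (so $\DD^-_{d_0}(E)\subseteq\DD^-(E)$) and making available a Gross--Zagier / Waldspurger-type identity that expresses $L'(1/2,E\times\chi_d)$ --- up to the fixed nonzero constant $L(1/2,E\times\chi_{d_0})$ and explicit elementary factors --- as a Heegner-point height on the fixed curve $E\times\chi_{d_0}$, equivalently as the central value of an auxiliary $L$-function of conductor polynomial in $|d|$; this quantity is then bounded below by combining a Northcott-type finiteness argument on the fixed curve with the RH-conditional central-value estimate already used. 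I expect the real work to lie in this last reduction --- matching signs and local factors in the twisted Gross--Zagier formula and propagating the lower bound cleanly --- rather than in the zero-repulsion mechanism, which is uniform across the two cases.
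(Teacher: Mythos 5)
Your structural reduction is fine as far as it goes: under RH the completed $L$-function is even about the centre in the even case, the Hadamard product collapses to $\Lambda(1/2+z)=\Lambda(1/2)\prod_{\gamma>0}(1+z^2/\gamma^2)$, and keeping only the factor of the lowest conjugate pair is legitimate. The fatal step is the one you lean on for the exponent: the claimed ``RH-conditional lower bound for the central value,'' $\log|L(1/2,E\times\chi_d)|\ge -C_{E,\epsilon}\log|d|\,(\ln\ln|d|)^{-1+\epsilon}$ whenever $L(1/2,E\times\chi_d)\neq0$. No such bound follows from RH. Conditional lower bounds for $|L(s)|$ are only available when $s$ is kept at a controlled distance from all zeros, and at $s=1/2$ that distance is precisely $\gamma_d$, the quantity you are trying to bound from below, so the appeal is circular. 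What is actually available is the Waldspurger/Kohnen--Zagier special value formula $L(1/2,E\times\chi_d)=\kappa_E c_E(|d|)^2|d|^{-1/2}$ with $c_E(|d|)\in\Z$, which for $d\in\DD^0(E)$ gives only $L(1/2,E\times\chi_d)\gg_E|d|^{-1/2}$ --- and this arithmetic discreteness, entirely absent from your even-case argument, is exactly what the paper identifies as the source of the repulsion. Inserting the correct bound into your comparison at $z=1/2$ gives $\gamma_d^{-2}\ll|\Lambda(1)|/|\Lambda(1/2)|\ll|d|^{1+\epsilon}$, i.e.\ $\gamma_d\gg|d|^{-1/2-\epsilon}$: exponent $-1/2$, not $-1/4$. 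The odd case has the same defect: the usable input is Gross--Zagier combined with the Anderson--Masser height lower bound for nontorsion points of bounded degree, yielding $L'(1/2,E\times\chi_d)\gg|d|^{-1/2}$ (a Northcott-type finiteness statement alone does not give a height lower bound uniform in $d$), and again your distance-$1/2$ comparison then only produces $-1/2$.

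The loss is not in discarding the other Hadamard factors but in where you compare: the conductor factor $N_d^{\delta/2}$ costs $|d|^{1/2}$ at $\delta=1/2$, and that is exactly the gap between $-1/2$ and $-1/4$. To recover $-1/4$ with the genuine input $|L(1/2)|\gg|d|^{-1/2}$ you must compare at distance $\delta=o(1)$ from the centre, where the $L$-value is controlled by the RH subconvexity estimate $\ln|L(1/2+\delta)|\ll(\ln|d|)^{2-2\sigma}/((2\sigma-1)Q)+Q$ (Iwaniec--Kowalski, Thm.~5.19); taking $\delta=Q^{-1+\epsilon}$ with $Q=\ln\ln|d|$ gives $\gamma_d^{-2}\delta^2\le|\Lambda(1/2+\delta)|/|\Lambda(1/2)|\ll|d|^{1/2}\exp(O_\epsilon(\ln|d|\,Q^{-1+\epsilon}))$ and hence the stated bound with its error term. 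This is in substance what the paper does, via Jensen's formula on the unit disc about the centre together with an explicit-formula count of zeros down to height $Q^{-1+\epsilon}$ (the bulk zeros cancel against the boundary integral, leaving the $-\tfrac12\ln|d|$ from the central value to be shared by the pair $\pm\gamma_d$). Note also that your closing remark inverts the paper's message: the repulsion originates in the arithmetic special-value formulas (Waldspurger/Kohnen--Zagier, resp.\ Gross--Zagier plus the height gap), while the RH-based analysis only sharpens the error term from $o(1)$ to $O_{E,\epsilon}((\ln\ln|d|)^{-1+\epsilon})$.
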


\subsection{Relation to the Katz-Sarnak heuristics}

We shall first discuss the relationship between Theorem \ref{repel} and the Katz-Sarnak heuristics in the simpler case of even functional equation.  While the zero repulsion of Theorem \ref{repel} is only on a scale of $d^{-1/4 + o(1)}$, which is very small in comparison with the mean zero spacing of $\sim \ln |d|^{-1}$, the fact that it holds for all members of the (conjecturally) large family $\FF^0(E)$ means that one can infer properties of the distribution of rescaled lowest zeros from it under some natural assumptions.  Let $D > 0$ be given, and let $\LL_D$ be the multiset of rescaled lowest zeros,

\begin{equation*}
\LL_D = \{ \gamma_d \ln |d| \: | \: d \in \DD^0(E), D/2 \le |d| \le D \}.
\end{equation*}
Suppose that the set $\LL_D$ has a limiting distribution of the form $\rho(x) dx$ as $D \rightarrow \infty$, where $\rho(x)$ is a smooth density on $[0, \infty)$.  If $\rho(x)$ vanishes to order $r$ at the origin and we make the natural assumption that $| \LL_D | \gg D$ as $D \rightarrow \infty$, then we have

\begin{equation*}
P( \exists \, x \in \LL_D | x \le D^{-1/(r+1)} ) \ge \delta > 0
\end{equation*}
for $D$ sufficiently large.  Therefore, if $\LL_D$ has a limiting distribution of the type described for which $r \le 2$, we obtain a contradiction to Theorem \ref{repel}.  This is evidence that any limiting distribution of $\LL_D$ has to vanish to order at least three at the origin, in agreement with Miller's data.  Moreover, if one believes the `minimalist conjecture' that $\FF^0(E)$ makes up a density one subset of $\FF^+(E)$, then our result shows that the lowest zeros of the family $\FF^+(E)$ do not obey the standard Katz-Sarnak heuristics.  For reasons which will become apparent in the course of the proof of Theorem \ref{repel}, we believe that any such discrepancy should be viewed as a consequence of the special value formula

\begin{equation}
\label{wald0}
L(1/2, E \times \chi_d) = \kappa_E \frac{ c_E(|d|)^2}{ |d|^{1/2}}, \quad c_E(|d|) \in \Z,
\end{equation}
of Waldspurger \cite{W}, Kohnen-Zagier \cite{KZ} and Baruch-Mao \cite{BM}.\\

The tension between Miller's data and the standard $SO(2N)$ model for even twists of a fixed elliptic curve has also been considered by Due\~nez, Huynh, Keating, Miller and Snaith in \cite{HKS}.  They propose a modification to the $SO(2N)$ model which they term an \textit{excised orthogonal ensemble}, and which exhibits the observed repulsion from the critical point.  Their excised ensemble is the subset of $SO(2N)$ consisting of matrices $g$ whose characteristic polynomial $P(g,x)$ is not small at 1, which reflects the inequality

\begin{equation}
\label{Waldspurger}
L(1/2, E \times \chi_d ) \gg_E |d|^{-1/2}, \quad d \in \DD^0(E)
\end{equation}
that follows from the special value formula (\ref{wald0}).  We note that this gap is also the basis of our proof of Theorem \ref{repel} in the even case.  The natural choice of cut-off in the excised ensemble of \cite{HKS} is $| P(g, 1)| \ge C|d|^{-1/2}$ for some constant $C$, and with this choice they prove that their probability distribution, denoted $R_1^{T_{\mathcal X}}(\theta) d\theta$, exhibits a hard gap around the origin similar to that of Theorem \ref{repel}.  More precisely, they show that $R_1^{T_{\mathcal X}}(\theta) = 0$ for $|\theta| \ll d^{-1/4 + \delta}$ (Theorem 1.3, \cite{HKS}), where $\delta > 0$ depends on fitted parameters and may be made arbitrarily small.  Moreover, it seems likely that they would have vanishing of $R_1^{T_{\mathcal X}}(\theta)$ for $|\theta| \ll |d|^{-1/4 + o(1)}$ under some equidistribution assumption on the zeros of the matrices in their ensemble.  This is analogous to the way that obtaining an exponent of $-1/4$ in Theorem \ref{repel} requires controlling the bulk of the zeros of $L(s, E \times \chi_d)$.\\

In the case of odd functional equation, we now define $\LL_D$ to be

\begin{equation*}
\LL_D = \{ \gamma_d \ln |d| \: | \: d \in \DD^1(E) \cap \DD^-_{d_0}(E), D/2 \le |d| \le D \}.
\end{equation*}
The minimalist conjecture would again imply that $\DD^1(E) \cap \DD^-_{d_0}(E)$ is a density one subsequence of $\DD^-_{d_0}(E)$, and because this has $\gg D$ elements of size at most $D$, we may again combine the odd case of Theorem \ref{repel} with the above argument to deduce that any smooth limiting distribution of $\LL_D$ must vanish to order at least three.  Applying the minimalist conjecture once more, we obtain the same result for the lowest zeros in the family $\FF^-_{d_0}$ corresponding to $\DD^-_{d_0}$.  The Katz-Sarnak heuristics predict that these zeros should have the same distribution as the first nontrivial eigenvalue of a random matrix in $SO(2N+1)$, but this distribution only vanishes to second order at the origin (see pages 10-11 and 411-416 of \cite{KS1}, or page 10 of \cite{KS2}).  Similarly to the even case, we shall see that this discrepancy may be viewed as a consequence of the formula of Gross-Zagier \cite{GZ} for the central derivative of $L(s, E \times \chi_d)$, together with a lower bound for the height of a nontorsion point on an elliptic curve as in Anderson-Masser \cite{AM}.

{\bf Acknowledgements}:  We would like to thank Peter Sarnak, Eduardo Due\~nez, Duc Khiem Huynh, Jonathan Keating, Steven J. Miller, Nina Snaith and David Hansen for many helpful discussions, and Nicolas Templier for explaining the correct application of the Gross-Zagier formula in the odd case to us.

\section{Proof of Theorem \ref{repel} in the even case}

We shall first present the proof of Theorem \ref{repel} in the case of even functional equation, which combines the special value inequality (\ref{Waldspurger}) with a complex analytic argument.  To extend the proof to the odd case, we only need to replace this inequality with an analogue for the central derivative, which may be derived from the Gross-Zagier formula and a height gap for points on elliptic curves.  This is carried out in section \ref{oddsect}.

It should be noted that if one is content to replace the error term in Theorem \ref{repel} with $o(1)$ then the result follows immediately from the fact that the completed $L$-function $\Lambda(s,E \times \chi_d)$ has vanishing central derivative, and (after suitable normalisation) its central value satisfies (\ref{Waldspurger}) while its second derivative on the critical line is $\ll |d|^\epsilon$ by Lindel\"of.  Our refinement proceeds by combining (\ref{Waldspurger}) with Jensen's formula in the ball of radius 1 at the central point.  Let $\{ \gamma_{d,n} | n \ge 1 \}$ be the multiset of imaginary parts of zeros of $L(s, E \times \chi_d )$ (that is, counted with multiplicity), and define $Q = \ln \ln |d|$.  Note that all implied constants will be assumed to depend on $E$ from now on.  Jensen's formula states that

\begin{eqnarray}
\notag
 \sum_{ 1 \ge | \gamma_{d,n} | } \ln | \gamma_{d,n} | + \frac{1}{2\pi} \int_0^{2\pi} \ln | L( 1/2 + e^{i\theta}, E \times \chi_d ) | d\theta & = & \ln | L(1/2, E \times \chi_d ) | \\
\notag
\sum_{ Q^{-1+\epsilon} \ge | \gamma_{d,n} | } \ln | \gamma_{d,n} | + \sum_{ 1 \ge | \gamma_{d,n} | \ge Q^{-1 + \epsilon} } \ln | \gamma_{d,n} | \qquad \qquad & \quad & \\
\label{jensen}
 + \frac{1}{2\pi} \int_0^{2\pi} \ln | L( 1/2 + e^{i\theta}, E \times \chi_d ) | d\theta & \ge & -1/2 \ln |d| + O(1),
\end{eqnarray}
where we have broken up the sum on the LHS of (\ref{jensen}) with the aid of an arbitrarily chosen parameter $\epsilon$ satisfying $1 > \epsilon > 0$.  We shall prove the following asymptotics for the last two terms on the LHS of (\ref{jensen}):

\begin{prop}
\label{jzerosprop}
For every $1 > \epsilon > 0$, the second term in (\ref{jensen}) satisfies the asymptotic

\begin{equation}
\label{jzeros}
\sum_{ 1 \ge |\gamma_{d,n}| \ge Q^{-1+\epsilon} } \ln |\gamma_{d,n}| = -\frac{2}{\pi} \ln |d| + O_\epsilon( \ln |d| Q^{-1+\epsilon} ).
\end{equation}

\end{prop}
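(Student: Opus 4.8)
The plan is to express the sum over zeros as a Riemann--Stieltjes integral against the zero-counting function and feed in the Riemann--von Mangoldt formula for this family. Write $\tilde N(T)$ for the number of zeros $\rho = 1/2 + i\gamma_{d,n}$ of $L(s, E \times \chi_d)$ with $|\gamma_{d,n}| \le T$, counted with multiplicity, and put $\delta = Q^{-1+\epsilon}$. The completed $L$-function $\Lambda(s, E \times \chi_d)$ is entire of order one, has conductor $q_d$ with $\ln q_d = 2\ln|d| + O(1)$, and carries a degree-two archimedean factor; applying the argument principle to $\Lambda$ on a suitable box and evaluating the archimedean contribution by Stirling's formula yields
\begin{equation*}
\tilde N(T) = \frac{2T}{\pi}\ln|d| + O(T) + 2S_d(T), \qquad 0 \le T \le 1,
\end{equation*}
where $S_d(T) = \pi^{-1}\arg L(1/2 + iT, E \times \chi_d)$ is defined by continuous variation along the usual contour and, since $L(1/2, E \times \chi_d) > 0$ for $d \in \DD^0(E)$ by (\ref{wald0}), the normalisation may be taken so that $S_d(0) = 0$ and $\tilde N(0) = 0$; all implied constants depend only on $E$.

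Granting this, Riemann--Stieltjes integration by parts gives
\begin{equation*}
\sum_{1 \ge |\gamma_{d,n}| \ge \delta} \ln|\gamma_{d,n}| = \int_\delta^1 \ln t \, d\tilde N(t) = -\tilde N(\delta)\ln\delta - \int_\delta^1 \frac{\tilde N(t)}{t}\,dt.
\end{equation*}
Substituting the formula for $\tilde N$, the leading term $\frac{2t}{\pi}\ln|d|$ contributes $\frac{2}{\pi}\ln|d|\int_\delta^1 \ln t \, dt = -\frac{2}{\pi}\ln|d| + O(\ln|d| \cdot \delta\ln(1/\delta))$, which is the asserted main term. The remaining pieces involve $\delta$, the lower-order term $O(t)$, and the fluctuation term $S_d(t)$; to keep all of them small I invoke the standard consequence of the Riemann hypothesis for $L(s, E \times \chi_d)$ that $S_d(T) \ll \log(|d|(2+|T|)) / \log\log(|d|(2+|T|))$, which for $|T| \le 1$ reads $S_d(T) \ll \ln|d| / Q$. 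With this, $\tilde N(\delta) \ll (\delta + Q^{-1})\ln|d|$, $\int_\delta^1 |S_d(t)|\,t^{-1}\,dt \ll Q^{-1}\ln|d|\ln(1/\delta)$, and $\int_\delta^1 t^{-1}\,O(t)\,dt \ll 1$, so on bounding $\ln(1/\delta)$ and $|\ln\delta|$ by $O(\ln Q)$ every error term is $O(\ln|d| \cdot Q^{-1+\epsilon}\ln Q)$.

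Since $Q = \ln\ln|d|$ and $\ln Q = o(Q^\eta)$ for every $\eta > 0$, the parasitic factors $\ln Q = \ln\ln\ln|d|$ are absorbed into $Q^\epsilon$ (legitimately, as $\epsilon$ is arbitrary), and one obtains (\ref{jzeros}). The one substantive ingredient is the conditional bound on $S_d(T)$ near the central point---equivalently, the statement that the zeros there cluster by at most $O(\ln|d| / \ln\ln|d|)$ beyond the predicted local density $\tfrac{2}{\pi}\ln|d|$---and this is the step I expect to be the crux: without the Riemann hypothesis one has only $S_d(T) = O(\ln|d|)$, which would make the error in (\ref{jzeros}) exceed the main term and render the estimate useless. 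Everything else is Stirling's formula together with elementary manipulation of the Stieltjes integral.
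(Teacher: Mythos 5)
Your argument is correct, but it reaches the key zero-counting input by a different route than the paper. The paper proves Lemma \ref{countest} directly from the Rudnick--Sarnak explicit formula: it tests the zeros against a smoothed indicator $I_Q = I * \widehat{f_Q}$ built from a function of support $\ll Q$, so the prime sum contributes only $O(\ln|d|^{2/3})$, and then de-smooths using positivity and rapid decay of $\widehat{f}$, obtaining $N_d(a,b) = \frac{(b-a)\ln|d|}{\pi} + O_\epsilon(\ln|d|\,Q^{-1+\epsilon})$; the partial-summation step that follows is essentially identical to yours, including the silent absorption of $\ln Q$ factors by adjusting $\epsilon$. You instead quote the conductor-aspect Riemann--von Mangoldt formula together with the Littlewood-type bound $S_d(T) \ll \ln|d|/\ln\ln|d|$ under the Riemann hypothesis for $L(s,E\times\chi_d)$. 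That bound is a genuine classical theorem (not a triviality), and proving it is of comparable depth to the paper's Lemmas \ref{smoothest}--\ref{countest}: the standard derivations go through bounds for $\log L$ slightly off the critical line of exactly the type the paper quotes from Iwaniec--Kowalski (Proposition \ref{subconvex}), or through a mollified explicit formula much like the paper's. So your proof is valid provided that citation is accepted, and it even yields a marginally stronger error term ($\ln|d|\,Q^{-1}\ln Q$ rather than $\ln|d|\,Q^{-1+\epsilon}$); what the paper's treatment buys is a self-contained argument from the explicit formula, needing only the weaker counting error, and it correctly isolates where RH enters (reality of the $\gamma_{d,n}$ and the later boundary estimate) rather than outsourcing the crux to a black-box conditional bound on $S_d(T)$. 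Two minor points to keep in mind: your normalisation $S_d(0)=0$ uses $L(1/2,E\times\chi_d)\neq 0$, which is fine in the even case treated here but needs the obvious modification (a central zero, or working with $L(s,E\times\chi_d)/(s-1/2)$) when the same proposition is reused in the odd case; and the statement is claimed for each fixed $\epsilon$ with cutoff $Q^{-1+\epsilon}$, so the absorption of $\ln Q$ into $Q^{\epsilon}$ should strictly be phrased as proving the estimate for a slightly smaller $\epsilon$ and noting that moving the cutoff changes the sum by an admissible amount --- the same licence the paper takes.
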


\begin{prop}
\label{jbdryprop}

For every $1 > \epsilon > 0$, the third term in (\ref{jensen}) satisfies the bound

\begin{equation}
\label{jbdry}
\frac{1}{2\pi} \int_0^{2\pi} \ln | L( 1/2 + e^{i\theta}, E \times \chi_d ) | d\theta \le \frac{2}{\pi} \ln |d| + O_\epsilon( \ln |d| Q^{-1+\epsilon} ).
\end{equation}

\end{prop}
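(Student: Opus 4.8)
The plan is to estimate $\frac{1}{2\pi}\int_0^{2\pi} \ln |L(1/2 + e^{i\theta}, E \times \chi_d)|\, d\theta$ by relating $L(1/2 + e^{i\theta}, E \times \chi_d)$ to values of $L$ on the critical line, where we have good (Lindel\"of-type) bounds, and then integrating. The key observation is that for $\theta$ with $|\cos\theta|$ bounded away from $0$, the point $1/2 + e^{i\theta}$ sits at distance $\gg 1$ from the critical line, and there the Euler product or standard convexity bounds give $\ln|L| = O_\epsilon(\ln|d|\, Q^{-1+\epsilon})$ — in fact much smaller — so the entire contribution to the integral from that range is negligible. The main contribution therefore comes from a neighborhood of $\theta = \pm\pi/2$, i.e. from points $1/2 + it$ with $t$ near $\pm 1$, on or very close to the critical line.

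First I would move the contour: by the argument principle (or Jensen's formula applied to shifted disks), or more directly by writing $\ln|L(1/2+e^{i\theta})|$ in terms of the completed $L$-function and the gamma factors, reduce to understanding $\int \ln|\Lambda(1/2 + e^{i\theta})|\,d\theta$ where the gamma factor contributes a clean main term. The functional equation $\Lambda(s, E\times\chi_d) = \pm \Lambda(1-s, E\times\chi_d)$ shows $\ln|\Lambda(1/2 + e^{i\theta})|$ is symmetric under $\theta \mapsto \pi - \theta$ (reflection across the critical line), so I can fold the integral onto $\Real(s) \ge 1/2$. The gamma factor for $L(s, E\times\chi_d)$ is (up to the conductor power) $(|d|\sqrt{M}/2\pi)^{s}\Gamma(s + 1/2)$, and $\ln$ of the conductor part integrates to $\frac{1}{2\pi}\int_0^{2\pi}\Real(1/2 + e^{i\theta})\ln(|d|\cdots)\,d\theta = \frac{1}{2}\ln|d| + O(1)$; meanwhile $\ln|\Gamma(1 + e^{i\theta})|$ integrates to $O(1)$. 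That is \emph{not} the $\frac{2}{\pi}\ln|d|$ in \eqref{jbdry}, so the factor $\frac{2}{\pi}$ must instead come from combining this with Proposition \ref{jzerosprop}: the zeros near the real axis each contribute $\ln|1/2 + e^{i\theta} - \rho|$ integrated against $d\theta/2\pi$, which by the mean value property equals $\ln^+$ of the distance and produces exactly a term matching $-\frac{2}{\pi}\ln|d|$ in \eqref{jzeros} and $+\frac{2}{\pi}\ln|d|$ here. Concretely I would use the Hadamard product for $\Lambda$, take $\ln|\cdot|$, integrate term by term over the circle, and for each zero $\rho = 1/2 + i\gamma$ evaluate $\frac{1}{2\pi}\int_0^{2\pi}\ln|e^{i\theta} - i\gamma|\,d\theta = \max(0, \ln|\gamma|)$; summing over the $O(\ln|d|)$ zeros with $|\gamma| \le 2$, say, using the density estimate $N(T+1) - N(T) \ll \ln|d|$, and controlling zeros with $|\gamma|$ large via convergence of the Hadamard product, yields the claimed bound.

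The main obstacle is controlling the contribution of the low-lying zeros precisely enough — specifically, one needs that the zeros with $|\gamma|$ between $Q^{-1+\epsilon}$ and $1$ contribute, through the circle integral, a quantity that matches the corresponding sum in \eqref{jzeros} up to $O_\epsilon(\ln|d|\, Q^{-1+\epsilon})$, so that the two propositions together telescope the ``near-origin'' zeros out of Jensen's formula \eqref{jensen}. This requires the Riemann hypothesis for $L(s, E\times\chi_d)$ (so that $\ln|e^{i\theta} - i\gamma|$ is the exact harmonic extension with $\gamma$ real) together with a uniform-in-$d$ zero-density bound of the form $\#\{\gamma : |\gamma - t| \le 1\} \ll \ln(|d|(1+|t|))$, which is standard for $L$-functions of bounded degree. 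I would also need a pointwise bound $\ln|L(1/2 + it, E\times\chi_d)| \ll_\epsilon \ln|d|\, Q^{-1+\epsilon}$ for $|t| \asymp 1$ to absorb the boundary behavior; since $Q^{-1+\epsilon}$ is a very weak savings over $O(1)$, even the convexity bound suffices, so Lindel\"of is not actually needed here — only a $\log$-free version, i.e. the classical $\ln|L(1/2+it)| \ll \ln|d| / \ln\ln|d|$ from zero-density near the point, is required. The bookkeeping of matching constants $2/\pi$ between the two propositions is where care is needed, but it is forced by the mean-value identity above and should go through cleanly.
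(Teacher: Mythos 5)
There is a genuine gap, and it concerns precisely where the main term $\frac{2}{\pi}\ln|d|$ comes from. Your opening claim --- that for $|\cos\theta|$ bounded away from $0$ the bound $\ln|L(1/2+e^{i\theta},E\times\chi_d)|=O_\epsilon(\ln|d|\,Q^{-1+\epsilon})$ holds, so that only a neighbourhood of $\theta=\pm\pi/2$ matters --- is false on the left semicircle. For $\sigma=\Real(1/2+e^{i\theta})<1/2$ the functional equation forces $|L(s,E\times\chi_d)|$ to be \emph{large}, of size roughly $|d|^{1-2\sigma}$ (the conductor is $Md^2$), so $\ln|L|\asymp(1-2\sigma)\ln|d|\asymp\ln|d|$ there; the Euler product and convexity give smallness only on the right of the critical line. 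In fact the left semicircle is exactly the source of the main term: $\frac{1}{2\pi}\int_{\pi/2}^{3\pi/2}(-2\cos\theta)\,d\theta\,\ln|d|=\frac{2}{\pi}\ln|d|$, which is how the paper proves (\ref{jbdry}): it splits the circle into the strip within $Q^{-1+\epsilon}$ of the critical line (convexity bound $\ln|L|\ll\ln|d|$ times short arc length), the right semicircle (the RH-based bound of Proposition \ref{subconvex}, giving $\ll\ln|d|\,Q^{-1}$), and the left semicircle (functional equation, giving $\ln|L|\le(1-2\sigma)\ln|d|+O_\epsilon(\ln|d|\,Q^{-1})$).

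Your attempted repair --- attributing the $\frac{2}{\pi}\ln|d|$ to the low-lying zeros via the mean-value identity $\frac{1}{2\pi}\int_0^{2\pi}\ln|e^{i\theta}-i\gamma|\,d\theta=\ln^{+}|\gamma|$ so as to ``match'' Proposition \ref{jzerosprop} --- does not work: for the zeros with $|\gamma|\le 1$, which are the ones entering (\ref{jzeros}), one has $\ln^{+}|\gamma|=0$, so they contribute nothing to the circle average and cannot produce a $+\frac{2}{\pi}\ln|d|$ term. Worse, computing the boundary integral term-by-term from the Hadamard product is circular: carried out exactly, it returns $\frac{1}{2\pi}\int\ln|\Lambda(1/2+e^{i\theta})|\,d\theta=\ln|\Lambda(1/2)|-\sum_{|\gamma|\le1}\ln|\gamma|$, i.e.\ Jensen's formula (\ref{jensen}) itself, which is what Proposition \ref{jbdryprop} is meant to feed into. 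The non-circular input has to be the conductor/functional-equation bookkeeping on the left half of the circle (equivalently, after folding $\ln|\Lambda|$ by its symmetry, the factor $|d|^{\sigma}$ integrated over the circle, which yields $\frac{2}{2\pi}\int_{-\pi/2}^{\pi/2}\cos\theta\,d\theta=\frac{2}{\pi}$), together with an RH-based pointwise bound such as Proposition \ref{subconvex} away from, and the convexity bound very close to, the critical line. Your folding step and your remark that a bound of strength $\ln|L(1/2+it)|\ll\ln|d|/\ln\ln|d|$ suffices near the line are sound ingredients, but as written the accounting of the main term is incorrect and the proposed mechanism would not yield (\ref{jbdry}).
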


After substituting the asymptotics of Propositions \ref{jzerosprop} and \ref{jbdryprop} into equation (\ref{jensen}), we have

\begin{equation*}
\sum_{ Q^{-1+\epsilon} \ge | \gamma_{d,n} | } \ln | \gamma_{d,n} | \ge -1/2 \ln |d| + O_\epsilon( \ln |d| Q^{-1+\epsilon} ) + O(1),
\end{equation*}
and Theorem \ref{repel} follows from this on throwing away all but the lowest pair of conjugate zeros in the sum.

\subsection{Estimating sums over zeros}

Proposition \ref{jzerosprop} will follow by integration by parts from the following estimate for the counting function $N_d(a,b)$, defined by

\begin{equation*}
N_d(a,b) = | \{ \gamma_{d,n} | a \le \gamma_{d,n} \le b \} |.
\end{equation*}

\begin{lemma}
\label{countest}

We have the asymptotic

\begin{equation}
N_d(a,b) = \frac{ (b-a) \ln |d| }{\pi} + O_\epsilon( \ln |d| Q^{-1+\epsilon} )
\end{equation}
uniformly for $a$ and $b$ in any compact interval.

\end{lemma}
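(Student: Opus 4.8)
The plan is to obtain the asymptotic for $N_d(a,b)$ from the standard argument for counting zeros of $L$-functions in short intervals, via the argument principle applied to $\Lambda(s, E \times \chi_d)$. First I would write down the completed $L$-function $\Lambda(s) = (|d|\sqrt{M}/2\pi)^s \Gamma(s+1/2) L(s, E \times \chi_d)$ (with the appropriate normalisation so that the functional equation is $\Lambda(s) = \pm \Lambda(1-s)$ and the critical line is $\Real s = 1/2$), and express the number of zeros with imaginary part in $[a,b]$ as a contour integral of $\Lambda'/\Lambda$ around a thin rectangle of height $b-a$ straddling the critical line, say with real parts in $[1/2 - \sigma_0, 1/2 + \sigma_0]$ for a fixed small $\sigma_0$. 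The contribution of the gamma factor and the conductor term $(|d|\sqrt M/2\pi)^s$ to the change in argument is $(b-a)\ln|d|/\pi + O(\ln(|a|+|b|+2))$, which supplies the main term; the content of the lemma is that the contribution of $L(s, E\times\chi_d)$ itself to the variation of the argument is $O_\epsilon(\ln|d|\, Q^{-1+\epsilon})$, uniformly for $a,b$ in a fixed compact interval.

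The key step is therefore bounding the change in $\arg L(s, E\times\chi_d)$ along the horizontal and vertical sides of the rectangle. On the vertical segments $\Real s = 1/2 \pm \sigma_0$ the function $\log L$ is given by an absolutely convergent (or conditionally convergent, under RH) Dirichlet series, so that side contributes $O(1)$. For the horizontal segments at heights $a$ and $b$, I would use the classical device of bounding the number of sign changes of $\Real L(s, E\times\chi_d)$ on the segment by the number of zeros of an auxiliary analytic function, namely $\tfrac12(L(s+it_0,\cdot) + \overline{L(\bar s + it_0,\cdot)})$, and then invoking Jensen's formula on a disc to count those zeros in terms of $\log|L|$ on a slightly larger disc. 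This reduces matters to an upper bound of the shape $\log|L(s, E\times\chi_d)| \ll \ln|d|\, Q^{-1+\epsilon}$ on a fixed-radius disc about the relevant points of the critical line. Such a bound follows from the subconvex (indeed, under RH, Lindelöf-on-average is more than enough — a pointwise Lindelöf bound $|L| \ll |d|^\epsilon$ already gives $\log|L| \ll \epsilon \ln|d|$, and optimising $\epsilon$ against $Q$ yields the stated error), combined with the fact that $\log|L|$ cannot be too negative at points near which we have removed a small disc around each zero. Uniformity in $a,b$ over a compact set is automatic since all these estimates depend on the height only through $\log(|a|+|b|+2) = O(1)$.

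The main obstacle is the treatment of the horizontal sides, i.e. converting a pointwise upper bound for $|L|$ into an upper bound for the winding of $\arg L$ with the sharp error $\ln|d|\,Q^{-1+\epsilon}$ rather than merely $O(\ln|d|)$ or $o(\ln|d|)$. This is exactly where the free parameter $\epsilon$ in the statement enters: one applies the Lindelöf bound $|L(1/2+it, E\times\chi_d)| \ll_{\epsilon'} |d|^{\epsilon'}$ with $\epsilon'$ chosen as a small power of $1/Q = 1/\ln\ln|d|$, so that $\epsilon' \ln|d| \asymp \ln|d|\, Q^{-1+\epsilon}$, and one must check that the implied constants in the Jensen-type counting argument do not reintroduce a term of larger order. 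I would also need to be slightly careful that the zeros being counted all lie exactly on the line $\Real s = 1/2$ (using RH), so that ``$a \le \gamma_{d,n} \le b$'' is unambiguous and the rectangle can be taken symmetric about the critical line; with RH in force this is immediate. Once Lemma \ref{countest} is established, Proposition \ref{jzerosprop} follows by writing $\sum_{1 \ge |\gamma_{d,n}| \ge Q^{-1+\epsilon}} \ln|\gamma_{d,n}| = -\int_{Q^{-1+\epsilon}}^{1} t^{-1} N_d^{\mathrm{sym}}(t)\,dt$ (with $N_d^{\mathrm{sym}}(t)$ counting zeros with $t \le |\gamma_{d,n}| \le 1$) and integrating by parts, the main term $-\tfrac{2}{\pi}\ln|d|$ arising from $-\tfrac{2}{\pi}\ln|d|\int_{Q^{-1+\epsilon}}^1 t^{-1}\,dt \cdot$(appropriate bookkeeping) — more precisely from $\frac{2\ln|d|}{\pi}\int_{Q^{-1+\epsilon}}^1 \ln t \, dt = \frac{2\ln|d|}{\pi}(-1 + O(Q^{-1+\epsilon}))$, wait — one must simply track that $\int_0^1 \ln t\, dt = -1$, giving the constant $-2/\pi$, with the truncation at $Q^{-1+\epsilon}$ and the error term in Lemma \ref{countest} each contributing $O_\epsilon(\ln|d|\,Q^{-1+\epsilon})$.
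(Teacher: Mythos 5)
Your route is genuinely different from the paper's. The paper never touches the argument principle: it applies the Rudnick--Sarnak explicit formula to a smoothed indicator $I_Q = I * \widehat{f_Q}$ built from a Fej\'er-type kernel (so that $I_Q \ge 0$ off the interval and $\widehat{f}\ge 0$), obtains the smoothed count with error $O(\ln|d|^{2/3})$ because the prime sum is supported in $[-Q,Q]$ and hence has length only $e^{Q} = \ln|d|$, and then unsmooths using the rapid decay of $\widehat{f}$ together with the crude density bound $N_d(-T,T)\ll \ln|d|\,T\ln T$; the whole $O_\epsilon(\ln|d|\,Q^{-1+\epsilon})$ loss comes from this unsmoothing step. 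What you propose is instead the classical Littlewood-style conditional bound on the argument, i.e.\ the analogue of $S(t)\ll \log q/\log\log q$ under GRH in the conductor aspect. That statement is true and would indeed yield Lemma \ref{countest}, so the strategy is viable; but as written your sketch has concrete gaps.

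First, the claim that the vertical sides $\Real s = 1/2\pm\sigma_0$ contribute $O(1)$ ``because $\log L$ is given by an absolutely (or conditionally) convergent Dirichlet series'' is false for a fixed small $\sigma_0<1/2$: the Dirichlet series for $\log L$ does not converge inside the critical strip, and even under RH $\log|L(1/2+\sigma_0+it)|$ can be of size $(\ln|d|)^{1-2\sigma_0}$; the correct justification is an explicit RH-conditional bound such as the paper's Proposition \ref{subconvex} (IK Theorem 5.19). Second, ``optimising $\epsilon$ against $Q$'' in the Lindel\"of bound $|L|\ll_\epsilon |d|^\epsilon$ is not legitimate, since the implied constant's dependence on $\epsilon$ is uncontrolled; again one must use the explicit bound $\ln|L(s)|\ll (\ln|d|)^{2-2\sigma}/((2\sigma-1)Q)+Q$, in which the trade-off is already quantified. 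Third, the geometry does not deliver the stated error with a \emph{fixed} $\sigma_0$: the Jensen/sign-change disc necessarily reaches distance $\asymp\sigma_0$ to the left of the critical line, where the functional equation forces $\ln|L|\asymp\sigma_0\ln|d|$, so this argument alone gives only $O(\sigma_0\ln|d|)$ for the horizontal sides --- of the same order as the main term. To recover $O_\epsilon(\ln|d|\,Q^{-1+\epsilon})$ you must shrink the rectangle's half-width to $\asymp Q^{-1+\epsilon}$ (using RH to keep all zeros inside) and then control the anchor point via a two-sided conditional bound on $\ln|L|$ at distance $Q^{-1+\epsilon}$ from the critical line, or else run Littlewood's integrated version of the argument. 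None of this is fatal, but it is exactly where the work lies, and the paper's explicit-formula approach avoids it entirely at the cost of the smoothing/unsmoothing analysis. Your closing derivation of Proposition \ref{jzerosprop} from the lemma by partial summation matches the paper's.
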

Lemma \ref{countest} will be proven in turn by applying the explicit formula of the $L$-function $L(s, E \times \chi_d)$.  The particular form that we shall use is the one stated in Proposition 2.1 of \cite{RS}.  To recall it, let $g \in C^\infty_0(\R)$ be given, and define $h(r) = \int_{-\infty}^\infty g(x) e^{irx} dx$.  Let the conductor of $L(s,E)$ be $M$.  If we have

\begin{equation*}
L(s,E) = \sum_{n=1}^\infty \frac{a(n)}{n^s},
\end{equation*}
we define $c_d(n) = \Lambda(n) \chi_d(n) a(n)$ so that

\begin{equation*}
-\frac{L'}{L} (s, E \times \chi_d) = \sum_{n=1}^\infty \frac{c_d(n)}{n^s}.
\end{equation*}
With these notations, the explicit formula may be stated as follows:

\begin{prop}[{\bf The explicit formula}]
The zeros $\{ \gamma_{d,n} \}$ and coefficients $c_d(n)$ satisfy the relation

\begin{multline}
\label{explicit}
\sum_{\gamma_{d,n} } h( \gamma_{d,n} ) = \frac{1}{2\pi} \int_{-\infty}^\infty h(r) \left( \ln (M d^2 ) - 2 \ln 2\pi + \frac{ \Gamma' }{ \Gamma } ( 1 + ir) + \frac{ \Gamma' }{ \Gamma } ( 1 - ir) \right) dr \\
- \sum_{n=1}^\infty \left( \frac{ c_d(n) }{ \sqrt{n} } g( \ln n) + \frac{ \overline{c_d(n)} }{ \sqrt{n} } g( -\ln n) \right).
\end{multline}

\end{prop}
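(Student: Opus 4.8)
The plan is to recognize \eqref{explicit} as the Riemann--Weil explicit formula for the degree two $L$-function $L(s, E \times \chi_d)$, which is precisely the form given in Proposition 2.1 of \cite{RS}; the shortest route is therefore to check the hypotheses of that proposition for this particular $L$-function and quote it. Since $(d,M) = 1$ and $d \neq 1$, the twist $E \times \chi_d$ has conductor $M d^2$, and by modularity $L(s, E \times \chi_d)$ is the $L$-function of a cuspidal newform of weight two. Hence, with the analytic normalisation in which the central point is $1/2$, the completed $L$-function
\[
\Lambda(s, E \times \chi_d) = \left( \frac{\sqrt{M d^2}}{2\pi} \right)^{s} \Gamma\!\left( s + \tfrac12 \right) L(s, E \times \chi_d)
\]
is entire of order one and satisfies a functional equation $\Lambda(s, E \times \chi_d) = \varepsilon_d \, \Lambda(1 - s, E \times \chi_d)$ with $\varepsilon_d = \pm 1$. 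The $\Gamma$-factor $\Gamma(s + \tfrac12)$ is exactly what produces the archimedean term $\tfrac{1}{2\pi}\int h(r)\big(\ln(M d^2) - 2\ln 2\pi + \frac{\Gamma'}{\Gamma}(1+ir) + \frac{\Gamma'}{\Gamma}(1-ir)\big)\,dr$ in \eqref{explicit}, so the identity follows once these inputs are in place.

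For a self-contained argument I would argue by contour integration. Given $g \in C^\infty_0(\R)$ I would set $\Phi(s) = \int_{-\infty}^\infty g(x) e^{(s-1/2)x}\,dx$, which is entire, of exponential type because $g$ has compact support, decays faster than any polynomial on vertical lines, and satisfies $\Phi(1/2 + ir) = h(r)$. Then I would consider
\[
\frac{1}{2\pi i}\left( \int_{(2)} - \int_{(-1)} \right) \frac{\Lambda'}{\Lambda}(s, E \times \chi_d)\, \Phi(s)\, ds,
\]
where $(c)$ denotes the line $\Real s = c$. Shifting both contours to $\Real s = 1/2$ and using that $\Lambda(s, E\times\chi_d)$ is entire of order one, this difference collapses to $\sum_{\rho} \Phi(\rho) = \sum_n h(\gamma_{d,n})$, the sum being over the zeros $\rho = 1/2 + i\gamma_{d,n}$; the rapid decay of $\Phi$ makes the sum absolutely convergent and all the contour shifts legitimate.

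It then remains to evaluate the two line integrals in terms of $g$. On $\Real s = 2$ I would insert $\frac{\Lambda'}{\Lambda}(s, E \times \chi_d) = \tfrac12\ln(M d^2) - \ln 2\pi + \frac{\Gamma'}{\Gamma}(s+\tfrac12) - \sum_n c_d(n) n^{-s}$ and integrate term by term; Mellin (equivalently Fourier) inversion turns $\frac{1}{2\pi i}\int_{(2)} n^{-s}\Phi(s)\,ds$ into $g(\ln n)/\sqrt{n}$, producing the term $-\sum_n c_d(n) n^{-1/2} g(\ln n)$. On $\Real s = -1$ I would substitute $s \mapsto 1-s$ and use the functional equation in the form $\frac{\Lambda'}{\Lambda}(s, E\times\chi_d) = -\frac{\Lambda'}{\Lambda}(1-s, E\times\chi_d)$, which returns the integral to $\Real s = 2$ but with $\Phi(1-s) = \int g(-x) e^{(s-1/2)x}\,dx$ in place of $\Phi(s)$; the Dirichlet series part then contributes $-\sum_n \overline{c_d(n)} n^{-1/2} g(-\ln n)$, the conjugate recording the pairing against the dual coefficients (which here coincide with $c_d(n)$, since $a(n)$ and $\chi_d$ are real-valued). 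The two archimedean pieces $\tfrac12\ln(M d^2) - \ln 2\pi + \frac{\Gamma'}{\Gamma}(s + \tfrac12)$ are holomorphic for $0 < \Real s < 1$, so each can be moved back to $\Real s = 1/2$, where they combine to the stated integral against $h(r)$. Adding the three contributions gives \eqref{explicit}. I expect the only point needing genuine care to be the functional-equation substitution on the second line, together with the justification of the contour shifts and of interchanging summation with integration; all of these are controlled by the exponential decay of $\Phi$ and the standard polynomial bounds for $\Lambda'/\Lambda$ away from its zeros, and are routine because $g$ is smooth and compactly supported.
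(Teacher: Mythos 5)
Your proposal matches the paper, which gives no proof of its own and simply quotes Proposition 2.1 of \cite{RS} for the $L$-function $L(s, E \times \chi_d)$; your verification of the hypotheses (conductor $Md^2$, weight-two gamma factor $\Gamma(s+\tfrac12)$ in the analytic normalisation, self-dual functional equation) is exactly what is needed to invoke it. Your supplementary contour-integration derivation is the standard proof and is sound as sketched, so no gap to report.
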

We shall first use the explicit formula to prove a smooth version of Lemma \ref{countest}.  To state it, let $f_0 \in C^\infty_0(\R)$ be a real even function with support in $[ -1/2, 1/2]$, and define $f = f_0 * f_0$.  We then define

\begin{equation*}
f_Q(x) = f(x/Q), \qquad \widehat{f}(r) = \int_{-\infty}^\infty f(x) e^{irx} dx, \qquad \widehat{f_Q}(r) = \int_{-\infty}^\infty f_Q(x) e^{irx} dx. \\
\end{equation*}
Our construction of $f$ implies that $\widehat{f}(r) \ge 0$, and we may normalise $f$ so that $\int \widehat{f}(r) dr = 1$.  Let $I(r)$ be the characteristic function of the interval $[a,b]$, and define $I_Q$ to be the smooth approximation $I * \widehat{f_Q}$ to $I$.  We then have

\begin{lemma}
\label{smoothest}

We have the asymptotic 

\begin{equation*}
\sum_{ \gamma_{d,n} } I_Q( \gamma_{d,n} ) = \frac{(b-a) \ln |d|}{\pi} + O(\ln |d|^{2/3} ),
\end{equation*}
uniformly for $a$ and $b$ in any compact interval.

\end{lemma}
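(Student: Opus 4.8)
The plan is to apply the explicit formula (\ref{explicit}) with the test function $h = I_Q$ and to read off $\tfrac{(b-a)\ln|d|}{\pi}$ as the main term coming from the conductor, everything else being of strictly smaller order. First I would exhibit the function $g$ dual to $h = I_Q$. Since $I_Q = I*\widehat{f_Q}$, unwinding the definitions gives
\[
I_Q(r) = \int_a^b \widehat{f_Q}(r-t)\,dt = \int_{-\infty}^\infty f_Q(x)\,e^{irx}\left(\int_a^b e^{-itx}\,dt\right)dx,
\]
so $h(r) = \int_{-\infty}^\infty g(x)e^{irx}\,dx$ with $g(x) = f_Q(x)\,\frac{e^{-iax}-e^{-ibx}}{ix}$. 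The second factor extends to an entire function (with value $b-a$ at $x=0$), so $g \in C^\infty_0(\R)$ with support contained in that of $f_Q$, namely $[-Q,Q]$; this is exactly what is needed to invoke (\ref{explicit}). I would also record: $\widehat{f_Q}(r) = Q\widehat f(Qr)$, hence $\int_{-\infty}^\infty \widehat{f_Q}(r)\,dr = \int_{-\infty}^\infty \widehat f(r)\,dr = 1$ by the normalisation of $f$, and therefore $\int_{-\infty}^\infty I_Q(r)\,dr = (b-a)\int_{-\infty}^\infty \widehat{f_Q}(r)\,dr = b-a$; that $I_Q \ge 0$ since $\widehat f \ge 0$; and that $I_Q(r) \ll_N (1+|r|)^{-N}$ for every $N$, since $\widehat{f_Q}$ is Schwartz and $[a,b]$ is bounded.

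Next I would evaluate the archimedean side of (\ref{explicit}). The conductor term contributes $\frac{1}{2\pi}\ln(Md^2)\int_{-\infty}^\infty I_Q(r)\,dr = \frac{(b-a)\ln(Md^2)}{2\pi} = \frac{(b-a)\ln|d|}{\pi}+O(1)$, since $M$ is fixed and $b-a$ is bounded; this is the asserted main term. The constant $-2\ln 2\pi$ likewise contributes $O(1)$. For the digamma term, Stirling gives $\frac{\Gamma'}{\Gamma}(1\pm ir) \ll \ln(2+|r|)$, and combining this with $\int_{-\infty}^\infty I_Q = O(1)$ and the rapid decay of $I_Q$ away from the (bounded) interval $[a,b]$ shows that $\frac{1}{2\pi}\int_{-\infty}^\infty I_Q(r)\left(\frac{\Gamma'}{\Gamma}(1+ir)+\frac{\Gamma'}{\Gamma}(1-ir)\right)dr = O(1)$. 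Hence the archimedean side of (\ref{explicit}) is $\frac{(b-a)\ln|d|}{\pi}+O(1)$.

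It remains to bound the arithmetic sum $S = \sum_n\big(\frac{c_d(n)}{\sqrt n}g(\ln n)+\frac{\overline{c_d(n)}}{\sqrt n}g(-\ln n)\big)$. Because $g$ is supported in $[-Q,Q]$ with $Q=\ln\ln|d|$, only $n \le e^Q = \ln|d|$ occur, and from the formula for $g$ one has $|g(\pm\ln n)| \ll (\ln n)^{-1}$ for $n \ge 2$. Writing $c_d(n) = \Lambda(n)\chi_d(n)a(n)$, only prime powers $n = p^k$ contribute, and the Ramanujan bound $|a(p^k)| \le k+1$ (immediate from $|a(p)| \le 2$ and the Hecke relations) gives
\[
|S| \ll \sum_{p^k \le \ln|d|}\frac{\Lambda(p^k)\,|a(p^k)|}{p^{k/2}\,\ln(p^k)} \ll \sum_{p^k \le \ln|d|}\frac{1}{p^{k/2}} \ll \frac{\sqrt{\ln|d|}}{\ln\ln|d|},
\]
the last estimate by the prime number theorem, the $k=1$ terms dominating. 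Since $\sqrt{\ln|d|}/\ln\ln|d| = O(\ln|d|^{2/3})$, adding this to the archimedean contribution completes the proof.

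I do not expect a genuine obstacle here: the estimate is a direct application of the explicit formula. The one structural point that matters is the extreme shortness of the test function support, $Q = \ln\ln|d|$, which is so small compared with the $\sim \ln|d|$ density of zeros near the central point that the prime sum drops below the main term by a power of $\ln|d|$; consequently no cancellation among the $a(p)$ is needed and the trivial bound suffices (which is why the crude error $O(\ln|d|^{2/3})$ is harmless). The only steps requiring a little care are verifying $g \in C^\infty_0(\R)$ and controlling the digamma integral via the decay of $I_Q$, both of which are routine.
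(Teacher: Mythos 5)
Your proposal is correct and follows essentially the same route as the paper: apply the explicit formula with $h = I_Q$ and $g(x) = f_Q(x)\int_a^b e^{-itx}\,dt$, extract the main term $\frac{(b-a)\ln|d|}{\pi}$ from the conductor (with the constant and digamma terms contributing $O(1)$ by the uniform rapid decay and normalisation of $I_Q$), and bound the arithmetic sum trivially using the support condition $n \le e^Q = \ln|d|$ together with Ramanujan. Your bound $O(\sqrt{\ln|d|}/\ln\ln|d|)$ on the prime sum, using $\Lambda$-support on prime powers and the $1/x$ decay of $g$, is marginally sharper than the paper's $\ln|d|^{1/2+\epsilon}$, but both are relaxed to the stated $O(\ln|d|^{2/3})$, so the difference is immaterial.
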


\begin{proof}

Define $g_Q$ to be the function

\begin{eqnarray*}
g_Q(x) & = & f_Q(x) \frac{-1}{2\pi} \int_a^b e^{-i tx} dt \\
& = & \frac{1}{2 \pi ix} (e^{-ibx} - e^{-iax}) f_Q(x),
\end{eqnarray*}
so that $I_Q = \widehat{g_Q}$.  Substituting the pair $g_Q, I_Q$ into the explicit formula yields

\begin{multline}
\label{explicit2}
\sum_{ \gamma_{d,n} } I_Q( \gamma_{d,n} ) = \frac{1}{2\pi} \int_{-\infty}^\infty I_Q(r) \left( \ln (M d^2 ) - 2 \ln 2\pi + \frac{ \Gamma' }{ \Gamma } ( 1 + ir) + \frac{ \Gamma' }{ \Gamma } ( 1 - ir) \right) dr \\
- \sum_{n=1}^\infty \left( \frac{ c_d(n) }{ \sqrt{n} } g_Q( \ln n) + \frac{ \overline{c_d(n)} }{ \sqrt{n} } g_Q( -\ln n) \right).
\end{multline}
Because $I_Q(r)$ has rapid decay in $r$ at a rate which is uniform as $Q \rightarrow \infty$, and $\int_{-\infty}^\infty I_Q(r) dr = (b-a)$, we may simplify the integral in (\ref{explicit2}) so that it becomes

\begin{equation}
\label{explicit3}
\sum_{ \gamma_{d,n} } I_Q( \gamma_{d,n} ) = \frac{(b-a) \ln |d|}{\pi} + O(1) - \sum_{n=1}^\infty \left( \frac{ c_d(n) }{ \sqrt{n} } g_Q( \ln n) + \frac{ \overline{c_d(n)} }{ \sqrt{n} } g_Q( -\ln n) \right).
\end{equation}
We may bound the sum in (\ref{explicit3}) using the following properties of $g_Q$:

\begin{eqnarray*}
\text{supp} (g_Q) & \subseteq & [ -Q, Q], \\
|g_Q(x)| & \ll & |f_Q(x)| \left| \int_a^b e^{-i tx} dt \right| \\
& \ll & |a-b| \ll 1.
\end{eqnarray*}
With these, we have

\begin{eqnarray}
\notag
\left| \sum_{n=1}^\infty \frac{ c_d(n) }{ \sqrt{n} } g_Q( \ln n) \right| & \le & \sum_{n=1}^{e^Q } \frac{ |c_d(n)| }{ \sqrt{n} } \\
\notag
& \ll_\epsilon & \sum_{n=1}^{e^Q } n^{-1/2+\epsilon} \\
\notag
& \ll_\epsilon & e^{(1/2+\epsilon)Q} \\
\label{sumbound}
& = & \ln |d|^{(1/2 + \epsilon)},
\end{eqnarray}
and likewise for the second term.  Note that we have used the fact that the coefficients $a(n)$ satisfy the Ramanujan bound $|a(n)| \ll n^\epsilon$, which implies the same bound for $|c_d(n)|$.  Substituting (\ref{sumbound}) into (\ref{explicit3}) and relaxing $1/2 + \epsilon$ to $2/3$ gives the result.

\end{proof}

We now deduce Lemma \ref{countest} from Lemma \ref{smoothest} by comparing $I_Q$ with $I$.  If $r$ lies outside the intervals $[a - Q^{-1+\epsilon}, a + Q^{-1+\epsilon}]$ and $[b - Q^{-1+\epsilon}, b + Q^{-1+\epsilon}]$, the difference $|I(r) - I_Q(r)|$ may be estimated as

\begin{eqnarray}
\notag
|I(r) - I_Q(r)| & \le & \left| \left( \int_{-\infty}^{-Q^{-1+\epsilon}} + \int_{Q^{-1+\epsilon}}^\infty \right) \widehat{f_Q}(r) dr \right| \\
\notag
& = & \left| \left( \int_{-\infty}^{-Q^{-1+\epsilon}} + \int_{Q^{-1+\epsilon}}^\infty \right) Q \widehat{f}(Qr) dr \right| \\
\notag
& = & \left| \left( \int_{-\infty}^{-Q^{\epsilon}} + \int_{Q^{\epsilon}}^\infty \right) \widehat{f}(r) dr \right| \\
\label{IIQ}
& \ll_{\epsilon,A} & Q^{-A}
\end{eqnarray}
for any $A$, by the rapid decay of $\widehat{f}$.  The positivity of $I_Q$ means that we may estimate the sum $\sum I_Q( \gamma_{d,n} )$ from below by its restriction to $\gamma_{d,n} \in [a + Q^{-1+\epsilon}, b - Q^{-1+\epsilon}]$, after which we may apply (\ref{IIQ}) and Lemma \ref{smoothest} to bound $N_d(a,b)$ from above as follows:

\begin{eqnarray*}
\sum_{\gamma_{d,n} } I_Q( \gamma_{d,n} ) & \ge & \sum_{b - Q^{-1+\epsilon} \ge \gamma_{d,n} \ge a + Q^{-1+\epsilon}} I_Q( \gamma_{d,n} ) \\
\frac{(b-a) \ln |d|}{\pi} + O( \ln |d|^{2/3}) & \ge & (1 + O_\epsilon(Q^{-2})) N_d(a + Q^{-1+\epsilon}, b - Q^{-1+\epsilon} ).
\end{eqnarray*}
Replacing $a$ and $b$ with $a - Q^{-1+\epsilon}$ and $b + Q^{-1+\epsilon}$ yields the upper bound in Lemma \ref{countest}.

We derive a lower bound for $N_d(a, b)$ in a similar way, starting from the inequality

\begin{eqnarray*}
(1 + O_\epsilon(Q^{-2}) ) N_d(a - Q^{-1+\epsilon}, b + Q^{-1+\epsilon} ) & \ge & \sum_{b + Q^{-1+\epsilon} \ge \gamma_{d,n} \ge a - Q^{-1+\epsilon}}  I_Q( \gamma_{d,n} ) \\
& = & \frac{(b-a) \ln |d|}{\pi} + O(\ln |d|^{2/3} ) \\
& \quad & \quad - \sum_{ \gamma_{d,n} \notin (a - Q^{-1+\epsilon}, b + Q^{-1+\epsilon} ) }  I_Q( \gamma_{d,n} )
\end{eqnarray*}
The bounds

\begin{equation*}
| I_Q(r)| \ll_{\epsilon,A} (Qr)^{-A}, \quad r \notin (a - Q^{-1+\epsilon}, b + Q^{-1+\epsilon} )
\end{equation*}
and $N_d(-T, T) \ll \ln |d| T \ln T$ for $T \ge 1$ (see for instance Theorem 5.8, V, p. 104 \cite{IK}) imply that

\begin{equation*}
\sum_{ \gamma_{d,n} \notin (a - Q^{-1+\epsilon}, b + Q^{-1+\epsilon} ) }  |I_Q( \gamma_{d,n} )| \ll_\epsilon \ln |d| Q^{-2},
\end{equation*}
and so

\begin{equation*}
N_d(a - Q^{-1+\epsilon}, b + Q^{-1+\epsilon} ) \ge \frac{(b-a) \ln |d|}{\pi} + O( \ln |d| Q^{-2}).
\end{equation*}
The lower bound in Lemma \ref{countest} again follows from this by substitution.

It remains to derive Proposition \ref{jzerosprop} from Lemma \ref{countest}.  We first apply integration by parts, which gives

\begin{eqnarray*}
\sum_{ 1 \ge \gamma_{d,n} \ge Q^{-1+\epsilon} } \ln \gamma_{d,n} & = & N_d(0,x) \ln x \bigg|_{Q^{-1+\epsilon}}^1 - \int_{Q^{-1+\epsilon}}^1 N_d(0,x) \frac{dx}{x} \\
& = & - N_d(0, Q^{-1+\epsilon}) \ln Q^{-1+\epsilon} - \int_{Q^{-1+\epsilon}}^1 N_d(0,x) \frac{dx}{x}.
\end{eqnarray*}
After substituting the asymptotic of Lemma \ref{countest}, this becomes

\begin{eqnarray*}
\sum_{ 1 \ge \gamma_{d,n} \ge Q^{-1+\epsilon} } \ln \gamma_{d,n} & = & O_\epsilon ( \ln |d| Q^{-1+\epsilon} \ln Q) - \int_{Q^{-1+\epsilon}}^1 \left( \frac{x}{\pi} \ln |d| + O_\epsilon ( \ln |d| Q^{-1+\epsilon} ) \right) \frac{dx}{x} \\
& = & -(1-Q^{-1+\epsilon}) \frac{ \ln |d| }{\pi} + O_\epsilon( \ln |d| Q^{-1+\epsilon} \ln Q) \\
& = & -\frac{ \ln |d| }{\pi} + O_\epsilon( \ln |d| Q^{-1+\epsilon}),
\end{eqnarray*}
and Proposition \ref{jzerosprop} follows from this by the symmetry of $\{ \gamma_{d,n} \}$.

\subsection{Proof of Proposition \ref{jbdryprop}}

We shall prove Proposition \ref{jbdryprop} with the aid of the following explicit subconvex bound for $L(s, E \times \chi_d)$ (Theorem 5.19, V, p. 116 \cite{IK}), which is valid for any $L$-function satisfying Riemann and Ramanujan.

\begin{prop}
\label{subconvex}
We have the bound

\begin{equation}
\ln |L(s, E \times \chi_d )| \ll \frac{ ( \ln |d| )^{2 - 2\sigma} }{ (2\sigma - 1) Q } + Q
\end{equation}
for all $s$ satisfying $|s - 1/2| \le 1$.

\end{prop}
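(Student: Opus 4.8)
The plan is to obtain Proposition \ref{subconvex} as a special case of the general conditional subconvexity estimate for $L$-functions satisfying the Riemann hypothesis and the Ramanujan conjecture, namely \cite[Theorem 5.19]{IK}; the work is in checking its hypotheses and translating the conclusion. First I would record that $L(s, E \times \chi_d)$ is the $L$-function of the $\GL_2/\Q$ cuspidal automorphic representation obtained by twisting the weight-two newform attached to $E$ by $\chi_d$. It is therefore entire of degree two with a functional equation relating $s$ and $1-s$ of the standard shape, and from the gamma factor and conductor appearing in the explicit formula (\ref{explicit}) one sees that its analytic conductor $\mathfrak{q}(s)$ satisfies $\ln\mathfrak{q}(s) = 2\ln|d| + O_E(1)$ throughout the disc $|s - 1/2| \le 1$. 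By our standing assumption it satisfies the Riemann hypothesis, and by Deligne's bound it satisfies $|a(n)| \ll_\epsilon n^\epsilon$ for its coefficients (as already used in the proof of Lemma \ref{smoothest}). These are exactly the hypotheses of \cite[Theorem 5.19]{IK}.

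Next I would carry out the translation. From $\ln\mathfrak{q}(s) = 2\ln|d| + O_E(1)$ we also have $\ln\ln\mathfrak{q}(s) = Q + O_E(1)$, and for $\sigma \ge 1/2$ the bound of \cite[Theorem 5.19]{IK} has the shape
\begin{equation*}
\ln|L(s, E \times \chi_d)| \ll \frac{(\ln\mathfrak{q}(s))^{2-2\sigma}}{(2\sigma-1)\ln\ln\mathfrak{q}(s)} + \ln\ln\mathfrak{q}(s),
\end{equation*}
so that substituting the two estimates above gives the asserted inequality. The range $\sigma < 1/2$ follows by reflection: on the disc the functional equation gives $\ln|L(s, E \times \chi_d)| = (1-2\sigma)\ln|d| + \ln|L(1-s, E \times \chi_d)| + O_E(1)$, and the term $(1-2\sigma)\ln|d|$ is exactly what produces, upon integration over this arc, the main term $\tfrac{2}{\pi}\ln|d|$ of Proposition \ref{jbdryprop}; the substantive content of Proposition \ref{subconvex} is therefore the range $\sigma \ge 1/2$.

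For completeness I would also sketch the argument underlying \cite[Theorem 5.19]{IK}, which is a quantitative form of Littlewood's bound on $\ln\zeta(1/2+it)$. Under the Riemann hypothesis $L(s, E \times \chi_d)$ is zero-free for $\sigma > 1/2$, so $\ln|L(s)|$ is harmonic there; integrating the local partial-fraction expansion of $L'/L$ yields an approximate formula of the shape
\begin{equation*}
\ln L(s) = \sum_{n \le Y} \frac{c_d(n)}{n^{s}\ln n}\, w\!\left(\frac{n}{Y}\right) + O_E\!\left( \frac{Y^{1/2-\sigma}\,\ln|d|}{(2\sigma-1)^2} \right)
\end{equation*}
for $\sigma > 1/2$, $Y \ge 2$, and a fixed smooth cutoff $w$, where the error is controlled using the Riemann hypothesis together with the local zero-count $N_d(T, T+1) \ll \ln|d|$ for bounded $T$ (cf.\ the counting estimate used after (\ref{IIQ})). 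One bounds the Dirichlet polynomial trivially by Ramanujan as $\ll Y^{1-\sigma}/\big((1-\sigma)\ln Y\big)$ for $\sigma < 1$, and chooses $\ln Y$ comparable to $\ln\ln|d|$ so as to balance the two contributions; this yields the stated bound.

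The only genuine obstacle, were one to avoid quoting \cite{IK}, is the uniformity as $\sigma \downarrow 1/2$: the $(2\sigma-1)$-dependence of the error in the approximate formula, and the optimal choice $Y = Y(\sigma,|d|)$, must be tracked with care to land exactly on the factor $(2\sigma-1)^{-1}(\ln\ln|d|)^{-1}$. The divergence of the bound at $\sigma = 1/2$ is genuine — even the Riemann hypothesis gives only $\ln|L(1/2+it)| \ll \ln|d|/\ln\ln|d|$ rather than a power saving at the central point — and this is exactly why, in the proof of Proposition \ref{jbdryprop}, the contour is split and the arc with $\sigma$ near $1/2$ is handled through the functional equation rather than through Proposition \ref{subconvex}.
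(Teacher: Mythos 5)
Your proposal takes essentially the same route as the paper: the paper offers no independent proof of Proposition \ref{subconvex}, but simply quotes Theorem 5.19 of \cite{IK} as valid for any $L$-function satisfying Riemann and Ramanujan, which is exactly what you do, with the added (correct) bookkeeping that $\ln\mathfrak{q}(s)=2\ln|d|+O_E(1)$ on the disc so that $\ln\ln\mathfrak{q}(s)=Q+O_E(1)$. Your observation that the substantive range is $\sigma\ge 1/2$, the left half of the disc being treated via the functional equation, matches how the bound is actually deployed in the proof of Proposition \ref{jbdryprop}.
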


We divide the integral in Proposition \ref{jbdryprop} into three regions, the first containing the points within $Q^{-1+\epsilon}$ of the critical line, and the second and third the semicircles to the right and left of the critical line.  In the first region, the length of the arc of integration and the convex estimate $\ln |L(s, E \times \chi_d )| \ll \ln |d|$ imply that the integral is $\ll \ln |d| Q^{-1+\epsilon}$.  In the right semicircle, we apply Proposition \ref{subconvex} and the bound $\sigma - 1/2 \ge Q^{-1+\epsilon}$ to obtain

\begin{eqnarray}
\notag
\ln |L(s, E \times \chi_d )| & \ll & \frac{ (\ln |d|)^{1 - 2Q^{-1+\epsilon} } }{ Q^{-1+\epsilon} Q } + Q \\
\notag
& = & \ln |d| \exp( - 2Q^{-1+\epsilon} \ln \ln |d|) Q^{-\epsilon} + Q \\
\notag
& = & \ln |d| \exp( -2 Q^\epsilon ) Q^{-\epsilon} + Q \\
\label{rightarc}
& \ll_\epsilon & \ln|d| Q^{-1}.
\end{eqnarray}
This implies that the integral over the right semicircle is also $O_\epsilon( \ln |d| Q^{-1})$.  In the left semicircle, we combine (\ref{rightarc}) with the functional equation to see that

\begin{equation}
\ln | L(s, E \times \chi_d ) | \le (1-2\sigma) \ln |d| + O_\epsilon( \ln |d| Q^{-1} ).
\end{equation}
This implies that the integral over the left semicircle is $\le 2\ln |d| / \pi + O_\epsilon( \ln |d| Q^{-1} )$, which completes the proof.

\section{The proof in the odd case}
\label{oddsect}

We begin the proof of Theorem \ref{repel} in the case of odd functional equation by applying the results of Bump-Friedberg-Hoffstein \cite{BFH}, Murty-Murty \cite{MM}, and Waldspurger \cite{W1, W2} to deduce the existence of a quadratic character $\chi_{d_0}$ with $(2M, d_0) = 1$ such that $L(1/2, E \times \chi_{d_0}) \neq 0$.  This is the $d_0$ that we shall take in the statement of Theorem \ref{repel}.  If we choose a second fundamental discriminant $d$ satisfying

\beq
(d, 2Md_0) = 1, \quad d_0 d <0, \quad d_0 d \equiv \Box \: (4M),
\eeq
then the discussion on pages 268-269 of \cite{GZ} shows that $L(s, E \times \chi_d)$ has odd functional equation and hence $\DD^-_{d_0}(E) \subseteq \DD^-(E)$ as claimed.  We may deduce Theorem \ref{repel} from the following lemma in exactly the same manner as in the case of even functional equation, by applying Jensen's formula to $L(s, E \times \chi_d) / (s - 1/2)$.

\begin{lemma}

For $d \in \DD^1(E) \cap \DD^-_{d_0}(E)$, we have

\beq
L'(1/2, E \times \chi_d) \gg |d|^{-1/2}.
\eeq

\end{lemma}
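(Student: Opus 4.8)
The lemma is the odd-case counterpart of the special-value inequality (\ref{Waldspurger}): once it is available, Theorem \ref{repel} follows exactly as in the even case by applying Jensen's formula to $L(s, E\times\chi_d)/(s-1/2)$, as already indicated. To prove it, the plan is to replace the Waldspurger--Kohnen--Zagier formula by the Gross--Zagier formula \cite{GZ}, which expresses a central \emph{derivative} in terms of the canonical height of a Heegner point, and then to bound that height below by a constant using a height gap on a fixed elliptic curve.

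Write $E_{d_0} = E\times\chi_{d_0}$; because $d_0$ is fixed (and depends only on $E$), this is a fixed elliptic curve over $\Q$, with fixed conductor and real period. For $d\in\DD^-_{d_0}(E)$ put $K = \Q(\sqrt{d_0 d})$: the condition $d_0 d<0$ makes $K$ imaginary quadratic, and since $(d,2Md_0)=1$ with $d$ a fundamental discriminant, the discriminant $d_K$ of $K$ satisfies $|d_K|\asymp_E|d|$. The congruence $d_0 d\equiv\Box\pmod{4M}$ is exactly what places the pair $(E_{d_0},K)$ in the range where the Gross--Zagier formula produces a Heegner point; this is the content of the discussion on pages 268--269 of \cite{GZ}, which at the same time shows that the sign of the functional equation of $L(s,E\times\chi_d)$ is $-1$. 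The formula then yields a Heegner point $y_K\in E_{d_0}(K)$ (the trace to $K$ of a CM point on the relevant modular curve) together with an identity of the form
\beq
L'(1/2, E_{d_0}/K) = c_{E_{d_0}}\, u_K^{-2}\,\frac{\hat{h}(y_K)}{|d_K|^{1/2}},
\eeq
where $c_{E_{d_0}}>0$ depends only on $E$ and $u_K=\tfrac12|\OO_K^{\times}|\in\{1,2,3\}$ equals $1$ outside two exceptional fields.

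Next I would use the factorisation $L(s, E_{d_0}/K) = L(s, E\times\chi_{d_0})\,L(s, E\times\chi_d)$, which holds because the quadratic base change twists $E_{d_0}$ by the character cutting out $K$, and that twist is $E\times\chi_d$. By the choice of $d_0$ the first factor is nonzero at $s=1/2$, while $d\in\DD^1(E)$ forces the second factor to vanish there to order exactly one; differentiating at $s=1/2$ therefore gives $L'(1/2, E_{d_0}/K)=L(1/2,E\times\chi_{d_0})\,L'(1/2,E\times\chi_d)$. Combining this with the Gross--Zagier identity and $|d_K|\asymp_E|d|$ yields
\beq
L'(1/2, E\times\chi_d) \asymp_E \frac{\hat{h}(y_K)}{|d|^{1/2}}.
\eeq
Since $L'(1/2, E\times\chi_d)\neq 0$ for $d\in\DD^1(E)$, the Heegner point $y_K$ is non-torsion, and it lies in a number field of degree $2$ on the \emph{fixed} curve $E_{d_0}$. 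Hence, by the lower bound of Anderson--Masser \cite{AM} for the canonical height of a non-torsion point on a fixed elliptic curve (for a point of bounded degree this is already a consequence of Northcott's theorem: the non-torsion points of degree $\le 2$ and bounded height on $E_{d_0}$ form a finite set, so have height bounded away from $0$), we obtain $\hat{h}(y_K)\gg_E 1$, and the lemma follows.

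The genuinely analytic input here is only the Gross--Zagier formula; what remains is careful bookkeeping, and I expect the delicate point to be precisely the form of Gross--Zagier used and the verification that $(E_{d_0},K)$ satisfies the hypotheses it requires. The conditions defining $\DD^-_{d_0}(E)$ --- in particular the appearance of the auxiliary twist $d_0$ and the congruence $d_0 d\equiv\Box\pmod{4M}$ --- are tuned exactly so that this works, which is also why the acknowledgements single out the correct application of the Gross--Zagier formula in the odd case.
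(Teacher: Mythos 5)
Your overall architecture is the same as the paper's: Gross--Zagier for the central derivative over $K=\Q(\sqrt{d_0 d})$, the factorisation $L(s,\cdot/K)=L(s,E\times\chi_{d_0})L(s,E\times\chi_d)$, nonvanishing of the first factor and first-order vanishing of the second, and then a height gap (Anderson--Masser, or as you correctly note, Northcott suffices for degree $\le 2$ points on a fixed curve) to bound $\hat h$ below. The factorisation step and the height-gap step are fine. The genuine gap is in how you invoke Gross--Zagier. You apply the classical formula of \cite{GZ} to the twisted curve $E_{d_0}=E\times\chi_{d_0}$ base-changed to $K$, with the Heegner point described as ``the trace to $K$ of a CM point on the relevant modular curve.'' But the conductor of $E_{d_0}$ is $Md_0^2$, while the discriminant of $K$ is $d_0d$; every prime $p\mid d_0$ therefore divides the conductor \emph{and} ramifies in $K$, so both hypotheses of the classical Gross--Zagier theorem (discriminant coprime to the level, all primes dividing the level split in $K$) fail. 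Indeed the construction you describe does not exist: a Heegner point of level $Md_0^2$ with CM by $\OO_K$ requires an ideal $\mathfrak{n}\subset\OO_K$ with $\OO_K/\mathfrak{n}$ cyclic of order $Md_0^2$, which is impossible when $p^2\,\|\,Md_0^2$ and $p$ ramifies in $K$. So the identity $L'(1/2,E_{d_0}/K)=c_{E_{d_0}}u_K^{-2}\hat h(y_K)|d_K|^{-1/2}$, which carries all the arithmetic content, is not justified as stated. The congruence $d_0d\equiv\Box\pmod{4M}$ is \emph{not} tuned to make $(E_{d_0},K)$ admissible; it is tuned to make $(E,K)$ admissible, i.e.\ to split all primes dividing $M$ in $K$.

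The paper avoids this by applying Gross--Zagier to $E$ itself (level $M$, coprime to $d_0d$, Heegner hypothesis satisfied), base-changed to $K$ and twisted by the \emph{genus class character} $\chi_{d,d_0}$ of $K$ attached to the factorisation $d_0\cdot d$ of the discriminant --- a case explicitly covered by \cite{GZ}, whose pages 268--269 also identify the resulting point $P_{d,d_0}$ as a point on the fixed curve $E$ defined over $\Q(\sqrt d)$; by Artin formalism this twisted base-change $L$-function coincides with your $L(s,E_{d_0}/K)$, so the same factorisation and height-gap argument then go through. Your route could in principle be repaired by invoking the general Gross--Zagier formula of Yuan--Zhang--Zhang (the global root number of $L(s,E_{d_0}/K)$ is indeed $-1$, and the relevant point would live on a Shimura curve rather than $X_0(Md_0^2)$), but that is a much heavier input than \cite{GZ} and you would still need to check that the resulting constant depends only on $E$ and $d_0$; as written, the key step fails.
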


\begin{proof}

The conditions that $(d, d_0) = 1$ and $d_0 d < 0$ allow us to define a genus class character $\chi_{d, d_0}$ of the imaginary quadratic field $K = \Q( \sqrt{d_0 d})$ (see for instance \cite{IK}, chapter 22, page 510).  We base-change $E$ to $K$ and twist by $\chi_{d, d_0}$, and denote the associated $L$-function by $L(s, E_K \times \chi_{d, d_0})$.  As the discriminant of $K$ is prime to $2M$ and all primes dividing $M$ split in $K$, we may apply the Gross-Zagier formula \cite{GZ} to the central derivative $L'(1/2, E_K \times \chi_{d, d_0})$ to obtain

\begin{equation}
\label{gross}
L'(1/2, E_K \times \chi_{d, d_0}) = C_E h(P_{d, d_0}) |d_0d|^{-1/2}.
\end{equation}
Here, $P_{d, d_0}$ is an algebraic point on $E$ which is defined over $\Q(\sqrt{d})$ (see the discussion on pages 268-269 of \cite{GZ}), and $h( \cdot )$ denotes the canonical height on $E$.  The Kronecker factorisation formula and the fact that $L(1/2, E \times \chi_d) = 0$ imply that

\begin{eqnarray*}
L(s, E_K \times \chi_{d, d_0}) & = & L(s, E \times \chi_{d_0}) L(s, E \times \chi_d) \\
L'(1/2, E_K \times \chi_{d, d_0}) & = & L(1/2, E \times \chi_{d_0}) L'(1/2, E \times \chi_d).
\end{eqnarray*}
Substituting this into (\ref{gross}) and using our assumption that $L(1/2, E \times \chi_{d_0}) \neq 0$, we have

\beq
L'(1/2, E \times \chi_d) = C_{E, d_0} h(P_{d, d_0}) |d|^{-1/2}.
\eeq
We now apply the results of Anderson-Masser \cite{AM}, which state that if $P$ is a nontorsion algebraic point on $E$ whose degree over $\Q$ is bounded, then the height $h(P)$ must be bounded from below.  Our assumption that $L'(1/2, E \times \chi_d) \neq 0$ implies that $P_{d, d_0}$ is not torsion, from which we then deduce that

\beq
L'(1/2, E \times \chi_d) \gg |d|^{-1/2}
\eeq
as required.

\end{proof}

\end{document}